\newtheorem{theorem}{Theorem}[section]
\newtheorem{lemma}[theorem]{Lemma}
\theoremstyle{definition}
\newtheorem{definition}[theorem]{Definition}
\newtheorem{remark}[theorem]{Remark}
\newcommand{\R}{\mathbb R}
\newcommand{\RNum}[1]{\uppercase\expandafter{\romannumeral #1\relax}}
\definecolor{piros}{rgb}{.8,0,0}
\definecolor{kek}{rgb}{0,0,.8}
\definecolor{purple}{rgb}{.4,0,.4}
\title{On the Existence of Ordinary Triangles}
\author[Fulek, Mojarrad, Nasz\'odi, Solymosi, Stich, Szedl{\'a}k]{Radoslav 
Fulek 
\and Hossein Nassajian Mojarrad \and M\'arton Nasz\'odi \and J\'ozsef Solymosi 
\and Sebastian U. Stich \and May Szedl{\'a}k}
\date{}
\keywords{Dirac--Motzkin Conjecture, incidences, ordinary lines, ordinary 
triangle, planar point set}
\subjclass[2010]{52C30}
\begin{document}
\begin{abstract}
Let $P$ be a finite point set in the plane.
A \emph{$c$-ordinary triangle} in $P$ is a subset of $P$ consisting of three 
non-collinear points such that each of the three lines determined by the three 
points contains at most $c$ points of $P$.
Motivated by a question of Erd\H{o}s, and answering a question of de Zeeuw, we 
prove that there exists a constant $c>0$ such that  $P$ contains a $c$-ordinary 
triangle, provided that $P$ is not contained in the union of two lines. 
Furthermore, the number of $c$-ordinary triangles in $P$ is $\Omega(|P|)$.
\end{abstract}
\maketitle
\section{Introduction}

In 1893, Sylvester~\cite{Syl93} asked whether, for any finite set of 
non-collinear points on the Euclidean plane, there exists a line incident 
with exactly two points.
The positive answer to this question, now known as the \emph{Sylvester--Gallai 
theorem}, was first obtained almost half a century later in 1941 by 
Melchior~\cite{Mel41} as a consequence of the positive answer to an analogous 
question in the projective dual.
Erd\H{o}s~\cite{Erd43}, unaware of these developments, posed the same 
problem in 1943, and it was solved by Gallai in 1944.
For more on the history of this and related problems, see \cite{GT13}.

Given a finite set of points $P$ on the Euclidean plane, a line $\ell\subset 
\R^2$ is \emph{determined} by $P$ if $\ell$ contains at least two 
points of $P$. We say that $\ell$ is an \emph{ordinary line}, if $\ell$ 
contains exactly two points of $P$.
Erd\H{o}s \cite{E84} considered the problem of finding an ordinary triangle, 
that is, three ordinary lines determined by three points of a finite planar 
point set. See \cite{Bor90} for details on the origin of this problem.
Motivated by this problem, and with an application in studying 
ordinary conics \cite{BZV16}, de Zeeuw asked a related question at the 13th 
Gremo's Workshop on Open Problems (GWOP 2015, Feldis, Switzerland), which we 
describe below.

\begin{definition}[$c$-ordinary triangle]
Let $c$ be a natural number and  let $P$ be a point set in the plane. 
A $c$-\textit{ordinary triangle} in $P$ is 
a subset of $P$ consisting of three non-collinear points such that each of the 
three lines determined by the three points contains at most $c$ points of $P$.
\end{definition}

It is easy to see that in order to be able to find a $c$-ordinary triangle 
for large $n$, we have to assume that $P$ is not contained in the union of two 
lines.
Under this restriction one might suspect that there is a 2-ordinary triangle in 
$P$. However, this is not true as shown by B\"or\"oczky's 
construction~\cite[Figure 4,5,6]{GT13}.
The following simple example also shows this. Let $P_1$ be a set of points 
that are not all collinear and let $\ell$ be some line. For each line $\ell_1$ 
determined by the point set $P_1$, we add the point at the intersection of 
$\ell$ and $\ell_1$. Let us denote this new set of points by $P_2$. All points 
of $P_2$ are collinear, hence a $2$-ordinary triangle must contain two points from 
$P_1$. However, by construction every line determined by $P_1$ contains a point 
of $P_2$. Hence there are no 2-ordinary triangles in this point set. 

De Zeeuw asked whether a $c$-ordinary triangle can be found in $P$. 
The aim of this manuscript is to give a positive answer to this question.
\begin{theorem}\label{main}
There is a natural number $c$ such that the following holds. Assume $P$ is a 
finite set of points on the Euclidean plane not contained in the union of two 
lines. Then $P$ contains a $c$-ordinary triangle, that is three non-collinear 
points such that each of the three lines determined by these three points 
contains at most $c$ points of $P$.  Moreover, the number of $c$-ordinary 
triangles in $P$ is $\Omega(|P|)$.
\end{theorem}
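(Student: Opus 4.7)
My plan is to prove the theorem by a case analysis based on the maximum number of collinear points of $P$, setting $n=|P|$. Fix a small constant $\alpha>0$ and a sufficiently large constant $c$, both to be determined at the end.

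\textbf{Case I.} There is a line $\ell$ with $|P\cap\ell|\geq\alpha n$. Let $P'=P\setminus\ell$; since $P$ is not contained in two lines, $P'$ is non-empty, not collinear, and $|P'|\geq 3$. If $P'$ is itself not contained in two lines, I would apply Sylvester--Gallai to $P'$ to obtain $p,q\in P'$ with $\overline{pq}\cap P'=\{p,q\}$; since $\overline{pq}\neq\ell$, the line $\overline{pq}$ meets $\ell$ in at most one point, so $|P\cap\overline{pq}|\leq 3$. For $r\in P\cap\ell$, the lines $\overline{pr}$ and $\overline{qr}$ meet $\ell$ only at $r$, giving $|P\cap\overline{pr}|=|P'\cap\overline{pr}|+1$ and likewise for $\overline{qr}$. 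A pencil count at $p$ shows that at most $(|P'|-1)/(c-2)\leq n/(c-2)$ lines through $p$ carry $c$ or more points of $P$; the same holds at $q$. Choosing $c$ large enough that $2n/(c-2)\leq\alpha n/2$ leaves $\Omega(n)$ choices of $r$ such that $\{p,q,r\}$ is a non-collinear $c$-ordinary triangle.

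The remaining possibility in Case~I is $P'\subseteq\ell_2\cup\ell_3$, in which case $P\subseteq\ell\cup\ell_2\cup\ell_3$. I would partition $P$ into $A_1\subseteq\ell$, $A_2\subseteq\ell_2\setminus\ell$, and $A_3\subseteq\ell_3\setminus(\ell\cup\ell_2)$. Each $A_i$ is non-empty (else $P$ lies in two lines) and, since $|P'|\geq 3$, one of $|A_2|,|A_3|$ is at least $2$. For any triple $(a_1,a_2,a_3)\in A_1\times A_2\times A_3$ avoiding the $O(1)$ pairwise intersection points of $\ell,\ell_2,\ell_3$, each determined line differs from all three of $\ell,\ell_2,\ell_3$ and meets each of them in at most one point, so contains at most $3$ points of $P$. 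At most $|A_1|\cdot|A_2|$ such triples are collinear, leaving $\Omega(|A_1|)=\Omega(n)$ non-collinear $3$-ordinary triangles.

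\textbf{Case II.} Every line contains fewer than $\alpha n$ points of $P$. The Szemer\'edi--Trotter theorem bounds the number of lines with at least $k$ points by $O(n^2/k^3+n/k)$. Combining this with the constraint $k\leq\alpha n$ via Abel summation, the number of pairs lying on a line with $\geq c$ points of $P$ is $O(n^2/c+\alpha n^2)$, so the number of triples containing at least one such rich side is $O(n^3/c+\alpha n^3)$. An analogous estimate bounds the total number of collinear triples of $P$ by $O(\alpha^2 n^3+n^2\log n)$. Choosing $\alpha$ small and $c$ large, the combined bad count is a small fraction of $\binom{n}{3}$, leaving $\Omega(n^3)$ non-collinear $c$-ordinary triangles, far more than the required $\Omega(n)$.

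The main obstacle will be tracking the Szemer\'edi--Trotter constants carefully through the Abel summation in Case~II so that $\alpha$ and $c$ can be chosen compatibly with the pencil bound used in Case~I. A secondary subtlety is the three-line subcase, where $P\setminus\ell$ does not inherit the non-two-line hypothesis and so must be handled by the direct combinatorial argument above rather than by generic incidence bounds.
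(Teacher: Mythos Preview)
Your approach is correct and matches the paper's structure: the same case split on whether some line carries at least $\alpha n$ points, Sylvester--Gallai plus a pencil count in Case~I, and Szemer\'edi--Trotter with dyadic summation to bound rich pairs in Case~II. Two minor differences are worth noting. First, your three-line subcase in Case~I is superfluous: Sylvester--Gallai needs only that $P'=P\setminus\ell$ be non-collinear, which already follows from the hypothesis that $P$ is not contained in two lines, so the paper applies it directly and runs the pencil argument without further splitting. Second, in Case~II the paper phrases the count via a graph on $P$ whose edges are the ``poor'' pairs and invokes a Tur\'an-type triangle-counting lemma to get $\Omega(n^3)$ triangles, whereas you subtract bad triples from $\binom{n}{3}$ directly; the two are equivalent, and your version avoids the auxiliary lemma at the cost of also needing your separate $O(\alpha^2 n^3+n^2\log n)$ bound on collinear triples.
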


\begin{remark}\label{rem:c}
The constant in the theorem above can be chosen as $c=12000$.
\end{remark}
We see no reason to believe that this is the best constant.
Moreover, it remains open if the number of  $c$-ordinary 
triangles in $P$ is superlinear (possibly even quadratic) in $|P|$.

\section{Tools}
To prove Theorem~\ref{main}, we need the following lemmas. The first one is a 
corollary of the Szemer\'{e}di-Trotter Theorem. 

\begin{lemma}\label{ST}\cite{ST,PRTT06}
Let $k,n \ge 2$ be natural numbers, $P$ a set of $n$ points in the plane, and 
let $f(k)$ denote the number of lines in the plane containing at least $k$ 
points of $P$. Then 
\[
f(k)\le\begin{cases}
c' \frac{n^2}{k^3}, \mbox{if } k \le \sqrt{n},\\
c' \frac{n}{k}, \mbox{if } k > \sqrt{n}              
\end{cases}
\]
for a universal constant $c'>0$.
In fact, we may take $c' = 125$.
\end{lemma}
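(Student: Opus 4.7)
The plan is to derive the two bounds by a direct application of the Szemer\'edi--Trotter incidence theorem to the set of $k$-rich lines, followed by a case analysis on which term in the incidence bound dominates.

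Let $\mathcal{L}_k$ denote the collection of lines that contain at least $k$ points of $P$, and set $m := |\mathcal{L}_k| = f(k)$. By the definition of $\mathcal{L}_k$, the number $I(P,\mathcal{L}_k)$ of point-line incidences between $P$ and $\mathcal{L}_k$ satisfies $I(P,\mathcal{L}_k) \geq km$. On the other hand, the Szemer\'edi--Trotter theorem gives a universal constant $C_1>0$ such that
\[
I(P,\mathcal{L}_k) \leq C_1\bigl(n^{2/3}m^{2/3} + n + m\bigr).
\]
Combining these inequalities yields $km \leq C_1(n^{2/3}m^{2/3} + n + m)$, and one of the three right-hand terms must account for at least $km/3$.

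I would then distinguish cases. If the $n^{2/3}m^{2/3}$ term dominates, rearranging gives $m \leq C_2\, n^2/k^3$ for an explicit constant $C_2$. If the $n$ term dominates, we immediately get $m \leq C_3\, n/k$. If the $m$ term dominates, we obtain $k \leq 3C_1$, which can only occur for a bounded range of $k$; in that range one falls back on the trivial pair-counting bound $f(k)\leq \binom{n}{2}/\binom{k}{2}$, which is absorbed into the claimed estimates after enlarging $c'$. A comparison of the two main bounds $n^2/k^3$ and $n/k$ shows that the former is the sharper one precisely when $k \leq \sqrt{n}$, and the latter is sharper when $k > \sqrt{n}$; this is exactly the dichotomy stated in the lemma.

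The main obstacle is not conceptual but numerical: obtaining the explicit constant $c'=125$ requires plugging in a quantitatively sharp form of the Szemer\'edi--Trotter theorem (for instance the version of Pach--Radoi\v{c}i\'c--Tardos--T\'oth cited alongside) and carefully tracking how the constant $C_1$ propagates through the case analysis, including verifying that the third, ``trivial,'' regime does not force a larger value of $c'$ than the one contributed by the first two cases.
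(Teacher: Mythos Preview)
Your proposal is correct and follows essentially the same approach as the paper: apply the Szemer\'edi--Trotter bound (in the explicit Pach--Radoi\v{c}i\'c--Tardos--T\'oth form) to the set of $k$-rich lines and do a case analysis on the resulting inequality $km \le C_1(n^{2/3}m^{2/3}+n+m)$. The only cosmetic difference is that the paper organizes the cases by whether $m \ge n$ or $m < n$ (and handles $k=2,3$ separately by the trivial pair-counting bound) rather than by which term dominates, but this is just an alternative bookkeeping of the same argument.
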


\begin{proof}
Clearly, the claimed bound holds for $k=2,3$, since $f(2)\le {\binom{n}{2}}$ 
and $f(3)\le {\binom{n}{2}}/{\binom{3}{2}}$. To prove the statement for $k > 
3$, we rely on the following result by Pach, Radoi{\v c}i\'c, Tardos and 
T\'oth \cite[Corollary 5.1]{PRTT06}: for any given $n$ points and $m$ lines on 
the Euclidean plane, the number of incidences between them is at most 
$2.5m^{2/3} n^{2/3} + m + n$. Let $m=f(k)$ denote the number of lines 
containing at least $k$ points of $P$. Observe that the number of point-line 
incidences are thus at least $mk$. Hence, $mk \leq 2.5m^{2/3} n^{2/3} + 
m + n$.

First, consider the case $m \geq n$. Observe that for $k>3$, we have $mk/2 \leq 
m(k-2) \leq 2.5m^{2/3} n^{2/3}$.
It follows that $m \leq 125 \frac{n^2}{k^3}$, and specifically, $m \leq 125 
\frac{n}{k}$ if $k > \sqrt{n}$.

Next, consider the case $m < n$. We have $mk \leq 2.5m^{2/3} n^{2/3} + m + n 
\leq 2.5m^{2/3} n^{2/3} +  2.5 n$, and therefore $mk \leq \max\{5 m^{2/3} 
n^{2/3},5n\}$. Hence, $m \leq \max\{ 125 \frac{n^2}{k^3}, 5\frac{n}{k}\}$. For 
$k \leq 5\sqrt{n}$, the maximum is attained at the first term, whereas for $k > 
\sqrt{n}$, we trivially have $\frac{n^2}{k^3} < \frac{n}{k}$,
establishing the claim for $c'=125$.
\end{proof}

The following Tur{\'a}n--type lemma (related to Mantel's theorem) from extremal 
graph theory provides a lower bound for the number of triangles (subgraphs 
isomorphic to $K_3$) in a graph.  It can be found with a proof as Problem~10.33 
in \cite{Lov07}.
\begin{lemma}\label{EG}
Consider a graph $G=(V(G), E(G))$ with $|V(G)|=n$ and $|E(G)|=m$. Let 
$t_3(G)$ denote the number of triangles in $G$. Then we have
$$t_3(G) \ge \frac{m}{3n}(4m-n^2).$$
\end{lemma}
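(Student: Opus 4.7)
The plan is to prove Lemma~\ref{EG} by the classical triangle-counting argument: bound the number of triangles through each edge by a common-neighborhood inequality, sum over edges, and then apply Cauchy--Schwarz to the degree sequence.

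First I would observe that for every edge $uv \in E(G)$, the number of triangles of $G$ containing $uv$ equals $|N(u) \cap N(v)|$. Since $N(u), N(v) \subseteq V(G)$, inclusion--exclusion gives the bound $|N(u) \cap N(v)| \ge d(u) + d(v) - n$. This bound may be negative for low-degree edges, but that is harmless: the true common-neighborhood count is nonnegative, so summing the right-hand sides over all edges still yields a valid lower bound.

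Next I would sum over edges and use that each triangle is counted once per edge, i.e.\ three times in total. This gives
\[
3\, t_3(G) \;\ge\; \sum_{uv \in E(G)} \bigl(d(u)+d(v)-n\bigr) \;=\; \sum_{v \in V(G)} d(v)^2 - mn,
\]
where in the last step I use that each vertex $v$ contributes $d(v)$ to the sum for each of its $d(v)$ incident edges. Now the Cauchy--Schwarz inequality (or the power-mean inequality) gives
\[
\sum_{v \in V(G)} d(v)^2 \;\ge\; \frac{1}{n}\Bigl(\sum_{v \in V(G)} d(v)\Bigr)^{\!2} \;=\; \frac{(2m)^2}{n} \;=\; \frac{4m^2}{n},
\]
using the handshake lemma $\sum_v d(v) = 2m$. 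Combining the last two displays yields $3\, t_3(G) \ge \frac{4m^2}{n} - mn = \frac{m(4m-n^2)}{n}$, which is exactly the claimed inequality after dividing by $3$.

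The only delicate point is the step $|N(u) \cap N(v)| \ge d(u) + d(v) - n$, where one must be comfortable that the bound is nonnegative only when useful but remains a valid lower bound when summed. Everything else is routine: a double-count identity expressing $\sum_{uv \in E} (d(u)+d(v))$ as $\sum_v d(v)^2$, and a single application of Cauchy--Schwarz. I do not anticipate any genuine obstacle beyond making sure the algebra of the final rearrangement matches the stated form $\frac{m}{3n}(4m - n^2)$.
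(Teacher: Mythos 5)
Your proof is correct: the edge-by-edge bound $|N(u)\cap N(v)|\ge d(u)+d(v)-n$, the identity $\sum_{uv\in E}(d(u)+d(v))=\sum_v d(v)^2$, and Cauchy--Schwarz give exactly $t_3(G)\ge \frac{m}{3n}(4m-n^2)$, and your remark that the possibly negative edge bounds still sum to a valid lower bound is the right way to handle the only delicate point. The paper does not prove this lemma itself but cites Problem~10.33 of Lov\'asz, whose standard solution is precisely this argument, so your approach is essentially the same as the one the paper relies on.
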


\section{Proof of Theorem~\ref{main}}

In this section, we prove Theorem~\ref{main}. 
Our proof is closely related to the standard proof of Beck's Theorem, where the 
number of pairs of points on medium-rich lines is bounded using the 
Szemer\'edi-Trotter theorem, and then it is concluded that either there is a 
very rich line, or there are many pairs of points on poor lines, see the proof 
of Theorem~18.8 in \cite{Ju11}.

The constant $c$ will be chosen at the end of the proof. Assume $P$ is a set of 
$n \ge c$ points in the plane and let $\mathcal{L}=\{L_1,L_2,\dots,L_m\}$ 
denote the set of lines determined by $P$. Define $l_i=|L_i \cap P|$, for  
$i=1,2,\dots,m$. \\ Set $\alpha=\frac{4}{c+1}$. We split the proof into two 
cases:
\begin{itemize}
\item [(i)] There is a line $L_i \in \mathcal{L}$ such that $l_i > \alpha n$;
\item [(ii)] For all $i=1,2,\dots,m$ we have $l_i \le \alpha n$.
\end{itemize} 

Consider the first case. Since the point set $P\setminus L_i$ is non-collinear 
by the assumption, by applying the Sylvester-Gallai theorem, we can 
find an 
ordinary line $L \in \mathcal{L}$ for $P\setminus L_i$, i.e. $L$ contains 
exactly two points $q,r \in P\setminus L_i$. Note that $L$ may contain at most 
one point of $P \cap L_i$. Next, we show that there are many points on $L_i$ 
which 
together with $q,r$ form $c$-ordinary triangles. For this, we define the set 
$P_q \subset P$ as
$$P_q=\{ p \in L_i\cap P : |\overline{pq} \cap P| > c\},$$
where $\overline{pq}$ denotes the line passing through $p,q$. We define $P_r$ 
in 
a similar way. Note that for any point $p\in P_q$, the line $\overline{pq}$ 
contains at least $c-1$ points of $P\setminus ( L_i\cup\{q\})$, moreover, these 
sets of $c-1$ points are disjoint for different $p\in P_q$. So we 
get
$$(c-1)\cdot |P_q| \le n-l_i,$$
which implies that
$$|P_q| \le \frac{n-l_i}{c-1} < 
\frac{l_i/\alpha-l_i}{c-1}=\frac{(c+1)/4-1}{c-1}\cdot l_i<\frac{l_i}{4}.$$
Similarly, $|P_r| < \frac{l_i}{4}$. So there are at 
least $\frac{l_i}{2}$ points 
$s\in P\cap L_i$ such that $s \notin P_q\cup P_r$. Furthermore, $s,q,r$ are 
non-collinear. This implies that the lines $\overline{sq},\overline{sr}$ 
contain 
at most $c$ points of $P$. Therefore every triangle determined by $s,q,r$, 
where $s \notin P_q\cup P_r$, is a 
$c$-ordinary triangle for $P$. 
The number of these triangles is at least
$\frac{l_i}{2}>\frac{\alpha n}{2}=\frac{2n}{c+1}$, 
completing the proof of case~(i). Note that, so far, $c$ may be chosen as any 
integer greater than 2.

Next, we consider case (ii). So we assume that no line of $\mathcal{L}$ 
contains more than $\alpha n$ points of $P$. First we bound $\sum_{c < l_i \le 
\alpha n} \binom{l_i}{2}$ from above. With the notation of Lemma~\ref{ST}, we 
have

\begin{align*}
\sum_{i\;:\;c < l_i \le \sqrt{n}} \binom{l_i}{2} &\le 
\sum_{j=\lfloor\log (c+1)\rfloor}^{\lceil\log{\sqrt{n}}\rceil} \;\;\;\sum_{i: 2^j \le l_i \le 2^{j+1}} \binom{l_i}{2} \le  
\sum_{j=\lfloor\log (c+1)\rfloor}^{\lceil\log{\sqrt{n}}\rceil} f(2^j) \binom{2^{j+1}}{2}  \\
&\overset{*}{\le}  \sum_{j=\lfloor\log (c+1)\rfloor}^{\lceil\log{\sqrt{n}}\rceil} c' \frac{n^2}{2^{3j}} 
\binom{2^{j+1}}{2}  \le  \sum_{j=\lfloor\log (c+1)\rfloor}^{\lceil\log{\sqrt{n}}\rceil} c' 
\frac{n^2}{2^{j-1}}\\
 &\le  \sum_{j=\lfloor\log (c+1)\rfloor}^{\infty} c' \frac{n^2} {2^{j-1}}\leq\frac{8c'n^2}{c+1},
\end{align*}
where logarithms are base $2$, and the inequality with star follows 
from Lemma~\ref{ST}. 

On the other hand, by the same lemma, we have
\begin{align*}
\sum_{i\;:\;\sqrt{n} < l_i \le \alpha n} \binom{l_i}{2} &\le 
\sum_{j=0}^{\lceil\log(\alpha\sqrt{n})\rceil-1} \sum_{2^j\sqrt{n} < l_i \le 
2^{j+1}\sqrt{n}} 
\binom{l_i}{2} \le  \sum_{j=0}^{\lceil\log(\alpha\sqrt{n})\rceil-1} f(2^j\sqrt{n}) 
\binom{2^{j+1}\sqrt{n}}{2} \\
&\overset{*}{\le} \sum_{j=0}^{\lceil\log(\alpha\sqrt{n})\rceil-1} c' 
\frac{n}{2^{j}\sqrt{n}} 
\binom{2^{j+1}\sqrt{n}}{2} \\
&\leq \sum_{j=0}^{\lceil\log(\alpha\sqrt{n})\rceil-1} c'n^{3/2} 2^{j+1} \le 4c'n^{3/2} 
\cdot 
\alpha \sqrt{n}=\frac{16c'n^2}{c+1}.
\end{align*}

As a result, we obtain
$$\sum_{i\;:\;c < l_i \le \alpha n} \binom{l_i}{2}= \sum_{i\;:\;c < l_i \le 
\sqrt{n}} 
\binom{l_i}{2}+\sum_{i\;:\;	\sqrt{n} < l_i \le \alpha n} \binom{l_i}{2} \le 
\frac{24c'n^2}{c+1}.$$

Let $G$ be the graph with vertex set $V(G)=P$, such that two points $p,p' 
\in P$ are adjacent in $G$ if the line $\overline{pp'}$ spanned by $p,p'$ 
satisfies $|\overline{pp'} \cap P| \le c$.

By the following identity
$$\sum_{i\;:\;2 \le l_i \le \alpha n} \binom{l_i}{2}=\binom{n}{2},$$
we obtain for the number of edges of $G$, 
\begin{equation}
|E(G)| =\sum_{i\;:\;2 \le l_i \le c} \binom{l_i}{2} \ge \binom{n}{2}- 
\frac{24c'n^2}{c+1}.
\label{eq6}
\end{equation}

Now we choose $c$ large enough such that 
\begin{equation}
4\left(\binom{n}{2}-  \frac{24c'n^2}{c+1}\right)-n^2=\Omega(n^2). 
\label{eq:determine_c}
\end{equation} Combining 
it with \eqref{eq6} yields $$4|E(G)|-n^2=4\left(\sum_{i:\;2 \le l_i \le c} 
\binom{l_i}{2}\right)-n^2=\Omega(n^2).$$ Therefore, by Lemma~\ref{EG} we have 
$$t_3(G) \ge \frac{|E(G)|}{3n}(4|E(G)|-n^2)=\frac{\Omega(n^2)}{n}\cdot 
\Omega(n^2)=\Omega(n^3).$$
This implies that $G$ has $\Omega(n^3)$ triangles. Let $T$ be the set of those 
triangles in $G$ whose three vertices are non-collinear. It is easy to see that these 
triangles correspond to $c$-ordinary triangles in $P$. 

Note that the number of triangles with collinear vertices is at most
\[\sum_{i\;:\;2 \le l_i \le c} \binom{l_i}{3}\le \sum_{i\;:\;2 \le l_i \le 
c}\binom{c}{3}\le \binom{n}{2}\cdot \binom{c}{3}=O(n^2).\]
So we get
 $$|T|=\Omega(n^3)-O(n^2)=\Omega(n^3).$$
As a result, $P$ has $\Omega(n^3)$ $c$-ordinary 
triangles, provided that $c$ satisfies \eqref{eq:determine_c}. 

\begin{proof}[Proof of Remark~\ref{rem:c}]
Equation~\eqref{eq:determine_c} yields that we may choose $c = 96 c'$, where 
$c'$ is from Lemma~\ref{ST}.
\end{proof}

\section*{Acknowledgements}
We thank Emo Welzl for providing the venue, his GWOP workshop, where our 
research initially started. We also thank Frank de Zeeuw for his many remarks 
on earlier versions of the manuscript.
We are grateful to both referees, whose comments made the presentation much 
cleaner. 

R. Fulek was partially supported by the People Programme (Marie Curie Actions) 
of the European Union's Seventh Framework Programme (FP7/2007-2013) under REA 
grant agreement no [291734].
H. N. Mojarrad and M. Nasz{\'o}di were members of J{\'a}nos Pach's Chair of DCG 
at EPFL, Lausanne, supported by the Swiss National Science 
Foundation (SNSF) Grants 200020-162884 and 200021-165977.
M. Nasz{\'o}di was also partially supported by the National Research, 
Development and Innovation Office grant K119670, and by the 
J\'anos Bolyai Research Scholarship of the Hungarian 
Academy of Sciences.
J. Solymosi was supported by NSERC, by ERC Advanced Research Grant no 267165 
(DISCONV) and by the National Research, Development and Innovation Office 
(NKFIH) grant NK 104183.
S. U. Stich acknowledges support from SNSF and grant ``ARC 14/19-060'' from the 
``Direction de la recherche scientifique - Communaut\'{e} fran\c{c}aise de 
Belgique''. M. Szedl{\'a}k's research was supported by the SNSF Project 
200021\_150055/1.

\bibliographystyle{amsalpha}
\bibliography{biblio} 
\end{document}